\newtheorem{theorem}{Theorem}
\theoremstyle{plain}
\newtheorem{definition}{Definition}
\newtheorem{remark}{Remark}
\numberwithin{equation}{section}
\begin{document}
\title[Ergodic Theorem in Grand Variable Exponent Lebesgue Spaces]{Ergodic
Theorem in Grand Variable Exponent Lebesgue Spaces}
\author{Cihan UNAL}
\address{Assessment, Selection and Placement Center, Ankara, TURKEY}
\email{cihanunal88@gmail.com}
\urladdr{}
\thanks{}
\date{}
\subjclass{Primary 28D05, 43A15; Secondary 46E30 }
\keywords{Variable exponent grand Lebesgue space, Ergodic theorem,
Probability measure}
\dedicatory{}
\thanks{}

\begin{abstract}
We consider several fundamental properties of grand variable exponent
Lebesgue spaces. Moreover, we discuss Ergodic theorems in these spaces
whenever the exponent is invariant under the transformation.
\end{abstract}

\maketitle

\section{Introduction}

In 1992, Iwaniec and Sbordone \cite{Iw} introduced grand Lebesgue spaces $%
L^{p)}\left( \Omega \right) $, ($1<p<\infty $), on bounded sets $\Omega
\subset 
\mathbb{R}
^{d}$ with applications to differential equations. A generalized version $%
L^{p),\theta }\left( \Omega \right) $ appeared in Greco et al. \cite{Gr}.
These spaces has been intensively investigated recently due to several
applications, see \cite{Ca}, \cite{Di}, \cite{Fi}, \cite{Fior}, \cite{Ko}, 
\cite{Ra}. Also the solutions of some nonlinear differential equations were
studied in these spaces, see \cite{Fs}, \cite{Gr}. The variable exponent
Lebesgue spaces (or generalized Lebesgue spaces) $L^{p(.)}$ appeared in
literature for the first time in 1931 with an article written by Orlicz \cite%
{Orli}. Kov\'{a}\v{c}ik and R\'{a}kosn\'{\i}k \cite{Kor} introduced the
variable exponent Lebesgue space $L^{p(.)}(%
\mathbb{R}
^{d})$ and Sobolev space $W^{k,p(.)}(%
\mathbb{R}
^{d})$ in higher dimensions Euclidean spaces. The spaces $L^{p(.)}(%
\mathbb{R}
^{d})$ and $L^{p}(%
\mathbb{R}
^{d})$ have many common properties such as Banach space, reflexivity,
separability, uniform convexity, H\"{o}lder inequalities and embeddings. A
crucial difference between $L^{p(.)}(%
\mathbb{R}
^{d})$ and $L^{p}(%
\mathbb{R}
^{d})$ is that the variable exponent Lebesgue space is not invariant under
translation in general, see \cite[Lemma 2.3]{Die} and \cite[Example 2.9]{Kor}%
. For more information, we refer \cite{Cr}, \cite{Dien} and \cite{Fan}.
Moreover, the space $L^{p(.)}\left( \Omega \right) $ was studied by \cite%
{Aoy}, where $\Omega $ is a probability space. The grand variable exponent
Lebesgue space $L^{p(.),\theta }\left( \Omega \right) $ was introduced and
studied by Kokilashvili and Meskhi \cite{Ko}. In this work, they established
the boundedness of maximal and Calderon operators in these spaces. Moreover,
the space $L^{p(.),\theta }\left( \Omega \right) $ is not reflexive,
separable, rearrangement invariant and translation invariant.

In this study, we give some basic properties of $L^{p(.),\theta }\left(
\Omega \right) ,$ and consider Birkhoff's Ergodic Theorem in the context of
a certain subspace of the grand variable exponent Lebesgue space $%
L^{p(.),\theta }\left( \Omega \right) $. So, we have more general results in
sense to Gorka \cite{Gor} in these spaces.

\section{\textbf{Notations and Preliminaries}}

\begin{definition}
Assume that $\left( \Omega ,\Sigma ,\mu \right) $ is a probability space and
let $p\left( .\right) :\Omega \longrightarrow \left[ 1,\infty \right) $ be a
measurable function (variable exponent) such that 
\begin{equation*}
1\leq p^{-}=\underset{x\in \Omega }{\text{essinf}}p\left( x\right) \leq 
\underset{x\in \Omega }{\text{esssup}}p\left( x\right) =p^{+}<\infty .
\end{equation*}%
The variable exponent Lebesgue space $L^{p(.)}(\Omega )$ is defined as the
set of all measurable functions $f$ on $\Omega $ such that $\varrho
_{p(.)}(\lambda f)<\infty $ for some $\lambda >0$, equipped with the
Luxemburg norm%
\begin{equation*}
\left\Vert f\right\Vert _{p(.)}=\inf \left\{ \lambda >0:\varrho _{p\left(
.\right) }\left( \frac{f}{\lambda }\right) \leq 1\right\} \text{,}
\end{equation*}%
where $\varrho _{p(.)}(f)=\dint\limits_{\Omega }\left\vert f(x)\right\vert
^{p(x)}d\mu \left( x\right) .$ The space $L^{p(.)}(\Omega )$ is a Banach
space with respect to $\left\Vert .\right\Vert _{p(.)}$. Moreover, the norm $%
\left\Vert .\right\Vert _{p(.)}$ coincides with the usual Lebesgue norm $%
\left\Vert .\right\Vert _{p}$ whenever $p(.)=p$ is a constant function. Let $%
p^{+}<\infty $. Then $f\in L^{p(.)}(\Omega )$ if and only if $\varrho
_{p(.)}(f)<\infty $.
\end{definition}

\begin{definition}
Let $\theta >0.$ The grand variable exponent Lebesgue spaces $L^{p(.),\theta
}\left( \Omega \right) $ is the class of all measurable functions for which%
\begin{equation*}
\left\Vert f\right\Vert _{p(.),\theta }=\sup_{0<\varepsilon
<p^{-}-1}\varepsilon ^{\frac{\theta }{p^{-}-\varepsilon }}\left\Vert
f\right\Vert _{p(.)-\varepsilon }<\infty .
\end{equation*}%
When $p(.)=p$ is a constant function, these spaces coincide with the grand
Lebesgue spaces $L^{p),\theta }\left( \Omega \right) $.
\end{definition}

It is easy to see that we have%
\begin{equation}
L^{p(.)}\hookrightarrow L^{p(.),\theta }\hookrightarrow L^{p(.)-\varepsilon
}\hookrightarrow L^{1}\text{, }0<\varepsilon <p^{-}-1  \label{2.1}
\end{equation}%
due to $\left\vert \Omega \right\vert <\infty ,$ see \cite{Da}, \cite{Ko}, 
\cite{Ra}.

\begin{remark}
Let $C_{0}^{\infty }(\Omega )$ be the space of smooth functions with compact
support in $\Omega .$ It is well known that $C_{0}^{\infty }(\Omega )$ is
not dense in $L^{p(.),\theta }\left( \Omega \right) $, i.e., the closure of $%
C_{0}^{\infty }(\Omega )$ with respect to the $\left\Vert .\right\Vert
_{p(.),\theta }$ norm does not coincide with the space $L^{p(.),\theta
}\left( \Omega \right) $. Now, we denote $\left[ L^{p(.)}\left( \Omega
\right) \right] _{p(.),\theta }$ as the closure of $C_{0}^{\infty }(\Omega )$
in $L^{p(.),\theta }\left( \Omega \right) $. Hence this closure is obtained
as%
\begin{equation*}
\left\{ f\in L^{p(.),\theta }\left( \Omega \right) :\lim_{\varepsilon
\rightarrow 0}\varepsilon ^{\frac{\theta }{p^{-}-\varepsilon }}\left\Vert
f\right\Vert _{p(.)-\varepsilon ,w}=0\right\}
\end{equation*}%
, see \cite{Da}, \cite{Gr}, \cite{Ko}. Moreover, we have%
\begin{equation*}
C_{0}^{\infty }(\Omega )\subset L^{p(.)}\left( \Omega \right) \subset \left[
L^{p(.)}\left( \Omega \right) \right] _{p(.),\theta }\text{ and }\left[
L^{p(.)}\left( \Omega \right) \right] _{p(.),\theta }=\overline{%
C_{0}^{\infty }(\Omega )}.
\end{equation*}
\end{remark}

\begin{definition}
Let $\left( G,\Sigma ,\mu \right) $ be a measure space. A measurable
function $T:G\longrightarrow G$ is called a measure-preserving
transformation if%
\begin{equation*}
\mu \left( T^{-1}(A)\right) =\mu \left( A\right)
\end{equation*}%
for all $A\in \Sigma .$
\end{definition}

\section{Main Results}

In the following theorem, we obtain more general result then \cite[Theorem
3.1]{Gor} since $L^{p(.)}\left( \Omega \right) \subset \left[ L^{p(.)}\left(
\Omega \right) \right] _{p(.),\theta }\subset L^{p(.),\theta }\left( \Omega
\right) $.

\begin{theorem}
Let $\left( \Omega ,\Sigma ,\mu \right) $ be a probability space and $%
T:\Omega \longrightarrow \Omega $ a measure preserving transformation.
Moreover, if $p(.)$ is $T$-invariant, i.e., $p(T(.))=p(.)$, then

(i) The limit 
\begin{equation*}
f_{av}(x)=\lim_{n\rightarrow \infty }\frac{1}{n}\tsum\limits_{j=0}^{n-1}f%
\left( T^{j}(x)\right)
\end{equation*}%
exists for all $f\in L^{p(.),\theta }\left( \Omega \right) $ and almost each
point $x\in \Omega $, and $f_{av}\in L^{p(.),\theta }\left( \Omega \right) $.

(ii) For every $f\in L^{p(.),\theta }\left( \Omega \right) $, we have 
\begin{equation}
f_{av}(x)=f_{av}\left( T(x)\right) ,  \label{3.1}
\end{equation}%
\begin{equation}
\dint\limits_{\Omega }f_{av}d\mu =\dint\limits_{\Omega }fd\mu .  \label{3.2}
\end{equation}

(iii) For all $f\in \left[ L^{p(.)}\left( \Omega \right) \right]
_{p(.),\theta }$, we get 
\begin{equation}
\lim_{n\rightarrow \infty }\left\Vert f_{av}-\frac{1}{n}\tsum%
\limits_{j=0}^{n-1}f\circ T^{j}\right\Vert _{p(.),\theta }=0.  \label{3.3}
\end{equation}
\end{theorem}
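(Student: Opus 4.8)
The plan is to bootstrap from the classical Birkhoff Ergodic Theorem in $L^1(\Omega)$, exploiting the chain of embeddings \eqref{2.1}, namely $L^{p(.)}\hookrightarrow L^{p(.),\theta}\hookrightarrow L^{p(.)-\varepsilon}\hookrightarrow L^1$, together with the $T$-invariance of $p(.)$, which guarantees that the averaging operator $A_n f = \frac1n\sum_{j=0}^{n-1} f\circ T^j$ does not distort the variable-exponent modular: since $p(T(x))=p(x)$, the measure-preserving property gives $\varrho_{q(.)}(f\circ T^j)=\varrho_{q(.)}(f)$ for every $q(.)$ of the form $p(.)-\varepsilon$, hence $\|f\circ T^j\|_{q(.)}=\|f\|_{q(.)}$ and consequently $\|f\circ T^j\|_{p(.),\theta}=\|f\|_{p(.),\theta}$. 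This isometry is the workhorse behind all three parts.

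\textbf{Part (i).} First I would note that $f\in L^{p(.),\theta}(\Omega)\subset L^1(\Omega)$ by \eqref{2.1}, so the classical Birkhoff theorem already yields the $\mu$-a.e.\ existence of $f_{av}(x)=\lim_n A_nf(x)$ and the invariance $f_{av}=f_{av}\circ T$ a.e., which disposes of \eqref{3.1}, and it gives $\int_\Omega f_{av}\,d\mu=\int_\Omega f\,d\mu$ when $f\in L^1$, which is \eqref{3.2}. The substantive point is that $f_{av}\in L^{p(.),\theta}(\Omega)$. For this I would fix $\varepsilon\in(0,p^--1)$, work in the constant-... no: work in $L^{p(.)-\varepsilon}(\Omega)$, and estimate $\|A_nf\|_{p(.)-\varepsilon}$. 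By the triangle inequality and the isometry above, $\|A_nf\|_{p(.)-\varepsilon}\le \frac1n\sum_{j=0}^{n-1}\|f\circ T^j\|_{p(.)-\varepsilon}=\|f\|_{p(.)-\varepsilon}$, so the averages are uniformly bounded in $L^{p(.)-\varepsilon}$; multiplying by $\varepsilon^{\theta/(p^--\varepsilon)}$ and taking the supremum over $\varepsilon$ gives $\|A_nf\|_{p(.),\theta}\le \|f\|_{p(.),\theta}$ for all $n$. Then I would pass to the limit: Fatou's lemma for the modular $\varrho_{p(.)-\varepsilon}$ (applicable since $A_nf\to f_{av}$ a.e.) yields $\|f_{av}\|_{p(.)-\varepsilon}\le\liminf_n\|A_nf\|_{p(.)-\varepsilon}\le\|f\|_{p(.)-\varepsilon}$, hence after the same normalization $\|f_{av}\|_{p(.),\theta}\le\|f\|_{p(.),\theta}<\infty$.

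\textbf{Parts (ii) and (iii).} Equation \eqref{3.1} and \eqref{3.2} follow as indicated above purely from the $L^1$ theory. For \eqref{3.3} I would use the density structure recorded in the Remark: $[L^{p(.)}(\Omega)]_{p(.),\theta}=\overline{C_0^\infty(\Omega)}$. The argument is the standard $\varepsilon/3$ approximation. Given $f\in[L^{p(.)}(\Omega)]_{p(.),\theta}$ and $\delta>0$, choose $g\in C_0^\infty(\Omega)\subset L^{p(.)}(\Omega)$ (in fact $g\in L^\infty$, or at least $g\in L^{p(.)}$) with $\|f-g\|_{p(.),\theta}<\delta$. Split
\begin{equation*}
\left\|f_{av}-A_nf\right\|_{p(.),\theta}\le \left\|(f-g)_{av}\right\|_{p(.),\theta}+\left\|g_{av}-A_ng\right\|_{p(.),\theta}+\left\|A_n(g-f)\right\|_{p(.),\theta}.
\end{equation*}
The first and third terms are each $\le\|f-g\|_{p(.),\theta}<\delta$ by the norm-nonexpansiveness of $h\mapsto h_{av}$ and of $A_n$ established in Part (i). For the middle term, I would use the classical $L^r$ mean ergodic theorem (with $r>1$) or, more cleanly, the dominated convergence theorem: $g\in L^\infty$ forces $|A_ng|\le\|g\|_\infty$ and $|g_{av}|\le\|g\|_\infty$, and $A_ng\to g_{av}$ a.e., so $|g_{av}-A_ng|^{p(x)-\varepsilon}\to 0$ pointwise under a uniform bound $(2\|g\|_\infty)^{p^+}$, whence $\varrho_{p(.)-\varepsilon}(g_{av}-A_ng)\to 0$; one checks this convergence is uniform in $\varepsilon\in(0,p^--1)$ (the dominating constant and the measure are $\varepsilon$-independent, and $\varepsilon^{\theta/(p^--\varepsilon)}\le 1$), so $\|g_{av}-A_ng\|_{p(.),\theta}\to 0$. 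Choosing $n$ large makes the middle term $<\delta$, giving $\limsup_n\|f_{av}-A_nf\|_{p(.),\theta}\le 2\delta$, and $\delta$ was arbitrary.

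\textbf{Main obstacle.} The only genuinely delicate point is the middle term in Part (iii): one must ensure the convergence $\|g_{av}-A_ng\|_{p(.)-\varepsilon}\to 0$ is \emph{uniform} in $\varepsilon$ so that it survives the supremum defining the grand norm. This is what forces the approximating functions to be taken in $L^\infty$ (so that a single $\varepsilon$-free dominating constant controls all the modulars simultaneously) rather than merely in $L^{p(.)}$; the factor $\varepsilon^{\theta/(p^--\varepsilon)}\le 1$ then only helps. Everything else — the a.e.\ statements, the modular Fatou estimate, the isometry under $T$ — is routine once $T$-invariance of $p(.)$ is invoked.
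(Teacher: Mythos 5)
Your proposal is correct and follows essentially the same route as the paper: reduction to the classical Birkhoff theorem via the embedding $L^{p(.),\theta}(\Omega)\hookrightarrow L^{1}(\Omega)$, nonexpansiveness of the averages in each $L^{p(.)-\varepsilon}$ coming from measure preservation plus $T$-invariance of $p(.)$, Fatou to control $f_{av}$, and the same three-term density argument with $C_0^\infty$ approximants and dominated convergence for part (iii). The only differences are cosmetic and in your favor: you establish nonexpansiveness by the norm triangle inequality and the isometry $\|f\circ T^j\|_{p(.)-\varepsilon}=\|f\|_{p(.)-\varepsilon}$ where the paper uses convexity/Jensen at the modular level, and you are more explicit than the paper about the uniformity in $\varepsilon$ needed to pass from $L^{p(.)-\varepsilon}$-convergence to convergence in the grand norm.
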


\begin{proof}
By (\ref{2.1}), the existence of the limit $f_{av}(x)$ for almost every
point in $\Omega $ follows from the standard Birkhoof's Theorem. By Fatou's
Lemma and the definition of the norm $\left\Vert .\right\Vert _{p(.),\theta
} $, we have%
\begin{eqnarray*}
\dint\limits_{\Omega }\left\vert f_{av}(x)\right\vert ^{p(x)-\varepsilon
}d\mu &=&\dint\limits_{\Omega }\left\vert \lim_{n\rightarrow \infty }\frac{1%
}{n}\tsum\limits_{j=0}^{n-1}f\left( T^{j}(x)\right) \right\vert
^{p(x)-\varepsilon }d\mu \\
&\leq &\dint\limits_{\Omega }\lim_{n\rightarrow \infty }\left( \frac{1}{n}%
\tsum\limits_{j=0}^{n-1}\left\vert f\left( T^{j}(x)\right) \right\vert
\right) ^{p(x)-\varepsilon }d\mu \\
&\leq &\liminf_{n\rightarrow \infty }\dint\limits_{\Omega }\left( \frac{1}{n}%
\tsum\limits_{j=0}^{n-1}\left\vert f\left( T^{j}(x)\right) \right\vert
\right) ^{p(x)-\varepsilon }d\mu \\
&\leq &\liminf_{n\rightarrow \infty }\frac{1}{n}\tsum\limits_{j=0}^{n-1}%
\dint\limits_{\Omega }\left\vert f\left( T^{j}(x)\right) \right\vert
^{p(x)-\varepsilon }d\mu
\end{eqnarray*}%
for any $\varepsilon \in \left( 0,p^{-}-1\right) .$ Here, we used convexity
and Jensen inequality in last step. Moreover, since $T$ is a measure
preserving map and $p(.)$ is $T$-invariant, we get 
\begin{equation*}
\dint\limits_{\Omega }\left\vert f(T(x))\right\vert ^{p(x)-\varepsilon }d\mu
=\dint\limits_{\Omega }\left\vert f(T(x))\right\vert ^{p(T(x))-\varepsilon
}d\mu =\dint\limits_{\Omega }\left\vert f(x)\right\vert ^{p(x)-\varepsilon
}d\mu .
\end{equation*}%
This follows that%
\begin{equation}
\dint\limits_{\Omega }\left\vert f_{av}(x)\right\vert ^{p(x)-\varepsilon
}d\mu \leq \dint\limits_{\Omega }\left\vert f(x)\right\vert
^{p(x)-\varepsilon }d\mu <\infty .  \label{3.4}
\end{equation}%
Thus, we obtain%
\begin{eqnarray*}
\left\Vert f_{av}\right\Vert _{p(.),\theta } &=&\sup_{0<\varepsilon
<p^{-}-1}\varepsilon ^{\frac{\theta }{p^{-}-\varepsilon }}\left\Vert
f_{av}\right\Vert _{p(.)-\varepsilon } \\
&\leq &\sup_{0<\varepsilon <p^{-}-1}\varepsilon ^{\frac{\theta }{%
p^{-}-\varepsilon }}\left\Vert f\right\Vert _{p(.)-\varepsilon }<\infty
\end{eqnarray*}%
and $f_{av}\in L^{p(.),\theta }\left( \Omega \right) .$ This completes 
\textit{(i)}. By the Ergodic Theorem in classical Lebesgue spaces, we have (%
\ref{3.1}) and (\ref{3.2}) immediately. In order to prove (\ref{3.3}), we
assume that $f\in C_{0}^{\infty }(\Omega )$. Thus, $f\in L^{\infty }(\Omega
) $ and 
\begin{eqnarray*}
\lim_{n\rightarrow \infty }\left\vert f_{av}(x)-\frac{1}{n}%
\tsum\limits_{j=0}^{n-1}f\left( T^{j}(x)\right) \right\vert
^{p(x)-\varepsilon } &=&0\text{, a.e.} \\
\left\Vert f_{av}\right\Vert _{L^{\infty }(\Omega )} &\leq &\left\Vert
f\right\Vert _{L^{\infty }(\Omega )}
\end{eqnarray*}%
for any $\varepsilon \in \left( 0,p^{-}-1\right) .$ Therefore, we have%
\begin{eqnarray*}
\left\vert f_{av}(x)-\frac{1}{n}\tsum\limits_{j=0}^{n-1}f\left(
T^{j}(x)\right) \right\vert ^{p(x)-\varepsilon } &\leq &\left\vert
\left\Vert f\right\Vert _{L^{\infty }(\Omega )}+\frac{1}{n}%
\tsum\limits_{j=0}^{n-1}\left\Vert f\left( T^{j}\right) \right\Vert
_{L^{\infty }(\Omega )}\right\vert ^{p(x)-\varepsilon } \\
&\leq &2^{p^{+}}\left( \left\Vert f\right\Vert _{L^{\infty }(G)}+1\right)
^{p^{+}-\varepsilon }\in L^{1}(\Omega ).
\end{eqnarray*}%
Hence, by Lebesgue dominated convergence theorem, we have (\ref{3.3}) and
provided $f\in C_{0}^{\infty }(\Omega ).$ Since $C_{0}^{\infty }(\Omega )$
is dense in $\left[ L^{p(.)}\left( \Omega \right) \right] _{p(.),\theta }$
with respect to the norm $\left\Vert .\right\Vert _{p(.),\theta }$, for any $%
f\in \left[ L^{p(.)}\left( \Omega \right) \right] _{p(.),\theta }$ and $\eta
>0$ there is a $g\in C_{0}^{\infty }(\Omega )$ such that%
\begin{equation}
\left\Vert f-g\right\Vert _{p(.),\theta }<\eta .  \label{3.5}
\end{equation}%
By the previous step, there is an $n_{0}$ such that%
\begin{equation}
\left\Vert g_{av}-\frac{1}{n}\tsum\limits_{j=0}^{n-1}g\circ T^{j}\right\Vert
_{p(.)-\varepsilon }<\eta  \label{3.6}
\end{equation}%
for $n\geq n_{0}$ and $\varepsilon \in \left( 0,p^{-}-1\right) $. Hence, we
have 
\begin{equation}
\left\Vert g_{av}-\frac{1}{n}\tsum\limits_{j=0}^{n-1}g\circ T^{j}\right\Vert
_{p(.),\theta }<\eta  \label{3.7}
\end{equation}%
by (\ref{3.6}) and the definition of the norm $\left\Vert .\right\Vert
_{p(.),\theta }$. This follows from (\ref{3.4}), (\ref{3.5}) and (\ref{3.7})
that%
\begin{eqnarray*}
\left\Vert f_{av}-\frac{1}{n}\tsum\limits_{j=0}^{n-1}f\circ T^{j}\right\Vert
_{p(.),\theta } &\leq &\left\Vert f_{av}-g_{av}\right\Vert _{p(.),\theta
}+\left\Vert g_{av}-\frac{1}{n}\tsum\limits_{j=0}^{n-1}g\circ
T^{j}\right\Vert _{p(.),\theta } \\
&&+\left\Vert \frac{1}{n}\tsum\limits_{j=0}^{n-1}\left( f-g\right) \circ
T^{j}\right\Vert _{p(.),\theta } \\
&\leq &2\left\Vert f-g\right\Vert _{p(.),\theta }+\left\Vert g_{av}-\frac{1}{%
n}\tsum\limits_{j=0}^{n-1}g\circ T^{j}\right\Vert _{p(.),\theta } \\
&<&\frac{\eta }{2}+\frac{\eta }{2}=\eta .
\end{eqnarray*}%
That is the desired result.
\end{proof}

\bigskip

\bigskip

\end{document}